\documentclass[10pt]{article}

\usepackage{amsmath}
\usepackage{amsthm}
\usepackage[all]{xy}
\usepackage{latexsym}


\newcommand{\FF}{\mathbf{F}}

\newcommand{\ZZ}{\mathbf{Z}}

\newcommand{\adj}{\operatorname{Adj}}

\newcommand{\cok}{\operatorname{coker}}

\newtheorem{theorem}{Theorem}
\newtheorem{proposition}{Proposition}
\theoremstyle{definition}
\newtheorem{definition}{Definition}

\newtheorem{example}{Example}


\begin{document}

\title{The adjoint group of an Alexander quandle.}
\author{F.J.-B.J.~Clauwens}

\maketitle
To an abelian group $M$ equipped with an automorphism $T$
one can associate a quandle $A(M,T)$ called its Alexander quandle.
It is given by the set $M$ together with the quandle operation $*$
defined by $y*x=Ty+x-Tx$.
To any quandle $Q$ one can associate a group $\adj(Q)$
called the \emph{adjoint group} of $Q$.
It is defined as the abstract group with one generator $e_x$ 
for each $x\in Q$ and one relation $e_{y*x}=e_x^{-1}e_ye_x$ for each $x,y\in Q$.

It is the purpose of this note to show that the adjoint group of an Alexander
quandle $Q(M,T)$ has an elegant description in terms of $M$ and $T$,
at least if the quandle is connected, which is the case if $1-T$ is invertible.
From this description one gets a similar description of the \emph{fundamental group}
of $Q(M,T)$ based at $0\in M$.
This note can be viewed as an exercise  inspired by \cite{eisermann5},
to which we refer for motivation and definitions.

\bigbreak

The adjoint group $A=\adj(A(M,T))$ acts from the right on $M$
by the formula $p\cdot e_x=p*x$.
This defines a homomorphism $\rho$ from $A$
to the group $G$ of quandle automorphisms of $A(M,T))$.
Thus $p\cdot e_0^{-1}=T^{-1}p$ and $p\cdot e_0^{-1}e_x=p+x-Tx$.
From one sees that
\begin{equation*}
p\cdot e_0^{-1}e_xe_0^{-1}e_y=p+x-Tx+y-Ty=p\cdot e_0^{-1}e_{x+y}
\end{equation*}
Therefore
\begin{equation}
\label{addform}
e_0^{-1}e_{x+y}=\gamma(x,y)e_0^{-1}e_xe_0^{-1}e_y
\end{equation}
for some $\gamma(x,y)\in \adj(Q)$ which acts trivially on $M$
and thus is an element of $K=\ker(\rho)$.
The group $K$ is a central subgroup of $A$ as explained in \cite{eisermann5}.

\bigbreak

From the definition of $\gamma(x,y)$ we see that $\gamma(0,y)=1$ and $\gamma(x,0)=1$ for all $x$ and $y$.
Furthermore the formulas
\begin{equation*}
\begin{split}
e_0^{-1}e_{x+y+z}
&=\gamma(x,y+z)e_0^{-1}e_xe_0^{-1}e_{y+z}\\
&=\gamma(x,y+z)e_0^{-1}e_x\gamma(y,z)e_0^{-1}e_ye_0^{-1}e_z\\
e_0^{-1}e_{x+y+z}
&=\gamma(x+y,z)e_0^{-1}e_{x+y}e_0^{-1}e_z\\
&=\gamma(x+y,z)e_0^{-1}\gamma(x,y)e_xe_0^{-1}e_ye_0^{-1}e_z
\end{split}
\end{equation*}
show that 
\begin{equation}
\gamma(x,y+z)\gamma(y,z)=\gamma(x+y,z)\gamma(x,y)
\text{ for all }x,y,z\in M
\end{equation}

\bigbreak

This shows that $\gamma$ is  a group $2$-cocycle for the group $M$
with values in $K$.
We will not use this: our purpose is not to show that $\gamma$
is a coboundary, 
but to show that it vanishes to a certain degree,
by exploiting its relation with $T$.
However if $\gamma$ were a coboundary
then in particular $\gamma(x,y)$ would be symmetric in $x$ and $y$.
This is one of the motivations to consider the map $\lambda\colon M\times M\to K$ defined by
\begin{equation}
\lambda(x,y)=\gamma(y,x)^{-1}\gamma(x,y)
=[e_0^{-1}e_y,e_0^{-1}e_x]
\end{equation}
\bigbreak
The defining relation for $A$ shows that 
$e_0e_xe_0^{-1}=e_{T^{-1}x}$
or equivalently
 $e_xe_0^{-1}=e_0^{-1}e_{T^{-1}x}$
for $x\in M$.
So we can rewrite
$e_{x+y}=\gamma(x,y)e_xe_0^{-1}e_y$
as
$e_{x+y}=\gamma(x,y)e_0^{-1}e_{T^{-1}x}e_y$.
In other words
\begin{equation}
e_ue_v=\gamma(Tu,v)^{-1}e_0e_{Tu+v}
\text{ for all }u,v\in M
\end{equation}
If we substitute this twice in the defining relation
we find that
\begin{equation*}
\begin{split}
&\gamma(Tu,v)^{-1}e_0e_{Tu+v}
=e_ue_v
=e_ve_{Tu+v-Tv}\\
&=\gamma(Tv,Tu+v-Tv)^{-1}e_0e_{Tv+Tu+v-Tv}\\
\end{split}
\end{equation*}
This implies that
$\gamma(Tu,v)=\gamma(Tv,Tu+v-Tv)$ for $u,v\in M$, 
in other words
\begin{equation}
\label{sym}
\gamma(x,y)=\gamma(Ty,x+y-Ty)\text{ for }x,y\in M
\end{equation}
and in particular
\begin{equation}
\label{gam}
\gamma(Ty,y-Ty)=1\text{ for }y\in M
\end{equation}
\bigbreak
We now switch to additive notation for $K$.
From (\ref{sym}) and the cocycle relation we find
\begin{equation*}
\begin{split}
&\gamma(u,v)+\gamma(v-Tv,u)\\
&=\gamma(Tv,v-Tv+u)+\gamma(v-Tv,u)\\
&=\gamma(Tv+v-Tv,u)+\gamma(Tv,v-Tv)
\end{split}
\end{equation*}
and in particular
\begin{equation}
\label{lamgam}
\lambda(u,v)=\gamma(u,v)-\gamma(v,u)=-\gamma(v-Tv,u)
\end{equation}
Thus if $\gamma$ were symmetric then $\lambda$ would vanish,
and so would $\gamma$ since $1-T$ is assumed to be invertible.
\bigbreak
Now we  look at the consequences for $\lambda$ of the cocycle condition for $\gamma$.
If we substitute (\ref{lamgam}) in the cocycle condition for $\gamma$ we find
\begin{equation*}
\lambda((1-T)^{-1}(x+y),z)+\lambda((1-T)^{-1}x,y)=\lambda((1-T)^{-1}x,y+z)+\lambda((1-T)^{-1}y,z)
\end{equation*}
and putting $x=u-Tu$, $y=v-Tv$ this yields
\begin{equation}
\label{cocgam}
\lambda(u+v,z)+\lambda(u,v-Tv)=\lambda(u,v-Tv+z)+\lambda(v,z)
\end{equation}
On the other hand subtracting two instances of the cocycle condition for $\gamma$
\begin{equation*}
\begin{split}
&\gamma(u,v+z)+\gamma(v,z)=\gamma(u+v,z)+\gamma(u,v)\\
&\gamma(z,v+u)+\gamma(v,u)=\gamma(z+v,u)+\gamma(z,v)\\
\end{split}
\end{equation*}
we find
\begin{equation}
\label{coclam}
\lambda(u+v,z)+ \lambda(u,v)=\lambda(u,v+z)+ \lambda(v,z)
\end{equation}
Substracting (\ref{coclam}) from (\ref{cocgam}) we find
\begin{equation}
\label{indep}
\lambda(u,v-Tv)-\lambda(u,v)=\lambda(u,v-Tv+z)-\lambda(u,v+z)
\end{equation}
This means that the right hand side of (\ref{indep}) does not depend on $z$;
in particular it has the same value for $z=-v$.
Thus using the fact that $\lambda(u,0)=0$ we can rewrite (\ref{indep}) as
\begin{equation}
\label{indep2}
\lambda(u,-Tv)=\lambda(u,v-Tv+z)-\lambda(u,v+z)
\end{equation}
Substituting $a=v+z$ and $b=-Tv$ this yields
\begin{equation}
\label{indep3}
\lambda(u,b)=\lambda(u,a+b)-\lambda(u,a)
\end{equation}
\bigbreak
We have just proved that $\lambda$ is additive in its second coordinate.
Since $\lambda$ is skew-symmetric it is in fact bi-additive.
Thus (\ref{lamgam}) and the invertibility of $1-T$ imply that $\gamma$ is bi-additive.
Moreover using (\ref{gam}) we can simplify (\ref{sym}) to
\begin{equation}
\gamma(x,y)=\gamma(Ty,x)\text{ for  all }x,y
\end{equation}
This motivates the following definition and theorem.
\begin{definition}
Define $\tau\colon M\otimes M\to M\otimes M$ by the formula $\tau(x\otimes y)=Ty\otimes x$.
Define $S(M,T)$ as $\cok(1-\tau)$. 
Thus $\gamma$ can be viewed as a map from $S(T,M)$ to $K$.
Finally define $F(M,T)$ as the set $\ZZ\times M\times S(M,T)$
with the multiplication given by
\begin{equation*}
(k,x,\alpha)(m,y,\beta)=(k+m,T^mx+y,\alpha+\beta+[T^mx\otimes y])
\end{equation*}
\end{definition}
\begin{theorem}
The groups $\adj(A(M,T))$ and $F(M,T)$ are isomorphic.
\end{theorem}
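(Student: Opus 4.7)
The plan is to construct mutually inverse group homomorphisms
$\psi \colon \adj(A(M,T)) \to F(M,T)$ and $\phi \colon F(M,T) \to \adj(A(M,T))$. Write $A = \adj(A(M,T))$. I would define $\psi$ on generators by $\psi(e_x) = (1, x, 0)$ and define $\phi$ by
\[
\phi(k, x, \alpha) = e_0^{k-1}\, e_x \cdot \bar\gamma(\alpha)^{-1},
\]
where $\bar\gamma \colon S(M,T) \to K$ is induced by $\gamma$. The existence of $\bar\gamma$ is granted by the excerpt: $\gamma$ is bi-additive and satisfies $\gamma(Ty,x) = \gamma(x,y)$, which says exactly that $\gamma$ kills the image of $1-\tau$ in $M \otimes M$.

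A preliminary step is verifying that $F(M,T)$ is a group. The identity is $(0,0,0)$ and inverses can be written down directly. Associativity reduces to the identity $[T^m x \otimes y] = [T^{m+n} x \otimes T^n y]$ in $S(M,T)$, which follows since $\tau^2(x \otimes y) = Tx \otimes Ty$ and hence the class of a simple tensor in $\cok(1-\tau)$ is invariant under simultaneously applying $T$ to both factors.

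That $\phi$ is a homomorphism is essentially a rewrite of identities already derived in the excerpt. Combining $e_x e_0 = e_0 e_{Tx}$ with $e_u e_v = \gamma(Tu,v)^{-1} e_0 e_{Tu+v}$ yields
\[
e_0^{k-1} e_x \cdot e_0^{m-1} e_y = \gamma(T^m x, y)^{-1} e_0^{k+m-1} e_{T^m x + y},
\]
which matches the product rule in $F(M,T)$; the $\bar\gamma(\alpha)$ and $\bar\gamma(\beta)$ factors pass through everything by centrality of $K$. For $\psi$, one must verify that the quandle relation $e_{Ty+x-Tx} = e_x^{-1} e_y e_x$ is respected in $F(M,T)$. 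Unpacking the conjugate on the right produces a triple of the form $(1, Ty + x - Tx, \delta)$, and the four simple-tensor classes making up $\delta$ cancel after applying the relations $[Ty \otimes x] = [x \otimes y]$ and (its $y = x$ case) $[Tx \otimes x] = [x \otimes x]$ in $\cok(1-\tau)$.

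Finally, $\phi \circ \psi$ equals the identity on the generators $e_x$ by inspection, and $\psi \circ \phi$ equals the identity on $F(M,T)$ once one checks $\psi(\bar\gamma(\alpha)) = (0, 0, -\alpha)$ in $F(M,T)$; this comes from applying $\psi$ to the definition of $\gamma(x,y)$ as a word in the $e_z$'s and simplifying. The main obstacle I anticipate is the well-definedness of $\psi$: the cancellations in $\cok(1-\tau)$ require careful bookkeeping with the $\tau$-invariance relations, even though each identity needed is already in hand from earlier in the paper.
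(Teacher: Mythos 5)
Your proposal is correct and follows essentially the same route as the paper: the same pair of mutually inverse maps (with the names $\phi$ and $\psi$ interchanged), the same use of the relations $[Tv\otimes u]=[u\otimes v]$ and $[Tx\otimes x]=[x\otimes x]$ in $\cok(1-\tau)$ to verify well-definedness on the quandle relation, and the same computation showing $\gamma(\alpha)$ maps to $(0,0,-\alpha)$ to conclude the two composites are the identity. The only addition is your explicit verification that $F(M,T)$ is associative via $[T^mx\otimes y]=[T^{m+n}x\otimes T^ny]$, a routine check the paper leaves implicit.
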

\begin{proof}
We define $\phi\colon\adj(A(M,T))\to F(M,T)$ by setting
$\phi(e_x)=(1,x,0)$.
To see that this is well defined we have to check the following:
\begin{equation*}
\begin{split}
&\phi(e_x)\phi(e_{y*x})
=(1,x,0)(1,Ty+x-Tx,0)\\
&=(2,Tx+(Ty+x-Tx),[Tx\otimes(Ty+x-Tx)])\\
&=(2,Ty+x,[Ty\otimes x])
=(1,y,0)(1,x,0)
=\phi(e_y)\phi(e_x)\\
\end{split}
\end{equation*}
which is the case since $[Tx\otimes Ty]=[Ty\otimes x]$ 
and $[Tx\otimes Tx]=[Tx\otimes x]$.\\
We define $\psi\colon F(M,T)\to \adj(A(M,T))$ 
by setting
$\psi(k,x,\alpha)=e_0^{k-1}e_x\gamma(\alpha)^{-1}$.
To see that $\psi$ is a homomorphism we have to check the following:
\begin{equation*}
\begin{split}
&\psi(k,x,\alpha)\psi(m,y,\beta)
=e_0^{k-1}e_x\gamma(\alpha)^{-1}
e_0^{m-1}e_y\gamma(\beta)^{-1}\\
&=e_0^{k-1}e_x
e_0^m e_0^{-1}e_y
\gamma(\alpha)^{-1}\gamma(\beta)^{-1}
=e_0^{k-1}e_0^m e_{T^mx}
e_0^{-1}e_y
\gamma(\alpha+\beta)^{-1}\\
&=e_0^{k+m-1}e_{T^mx+y}\gamma(T^mx\otimes y)^{-1}\gamma(\alpha+\beta)^{-1}\\
&=\psi(k+m,T^mx+y,\alpha+\beta+[T^mx\otimes y])\\
\end{split}
\end{equation*}
which is the case  $e_ze_0^{-1}e_y=e_{z+y}\gamma[z\otimes y]^{-1}$ for $z=T^mx$ by (\ref{addform}).\\
From $\psi(\phi(e_x))=\psi(1,x,0)=e_x$  we see that $\psi\phi=1$.
The other composition requires more work;
first we compute
\begin{equation*}
\begin{split}
&\phi(\gamma[u\otimes v]^{-1})
=\phi(e_{u+v}^{-1}e_ue_0^{-1}e_v)
=(1,u+v,0)^{-1}(1,u,0)(1,0,0)^{-1}(1,v,0)\\
&=(-1,-T^{-1}(u+v),[(u+v)\otimes(u+v)])(1,u,0)(-1,0,0)(1,v,0)\\
&=(-1,-T^{-1}(u+v),[(u+v)\otimes(u+v)])(1,u+v,[u\otimes v])
=(0,0,[u\otimes v])
\end{split}
\end{equation*}
which shows that $\phi(\gamma(\alpha)^{-1})=(0,0,\alpha)$ for all $\alpha$.
From this we get
\begin{equation*}
\phi(\psi(k,x,\alpha))
=\phi(e_0^{k-1})\phi(e_x)\phi(\gamma(\alpha)^{-1})
=(k-1,0,0)(1,x,0)(0,0,\alpha)
=(k,x,\alpha)
\end{equation*}
so we find that $\phi\psi=1$.
\end{proof}
\bigbreak
For any quandle $Q$ there is a unique homomorphism $\epsilon\colon\adj(Q)\to\ZZ$
such that $\epsilon(e_x)=1$ for all $x\in Q$;
the kernel is denoted by $\adj(Q)^o$.
It is clear that $\epsilon(\alpha)=0$ for all $\alpha$,
so $\epsilon(\psi(k,x,\alpha))=k$. 
Therefore under $\psi$ the subgroup $\adj(A(M,T))^o$ of $\adj(A(M,T))$
corresponds to the subgroup $F(M,T)^o$ of $F(M,T)$
consisting of the triples $(0,x,\alpha)$.
Note that on $F(M,T)^o$ the multiplication simplifies to
\begin{equation*}
(0,x,\alpha)(0,y,\beta)=(0,x+y,\alpha+\beta+[x\otimes y])
\end{equation*}
For any quandle the fundamental group based at $q\in Q$
is defined as $\pi_1(Q,q)=\{g\in\adj(Q)^o\;|\;q\cdot g=q\}$.
For these definitions we refer to \cite{eisermann5}.
In order to describe this in terms of $(M,T)$ for the case
$Q=A(M,T)$ we need to translate the action of $\adj(A(M,T))$ on $M$
into an action of $F(M,T)$ on $M$.

One can easily check that 
$0\cdot\psi(k,x,\alpha)=x-Tx$
for all $k$, $x$ and $\alpha$.
This implies that $0\cdot(0,x,\alpha)=0$ if and only if $x=0$,
which means that
$\pi_1(A(M,T),0)$ is isomorphic to $S(M,T)$.

\bigbreak
\begin{example}
Let $\FF$ be a field,
let $M=\FF[t]/(t^2+at+b)$
and let $T$ be multilpication by the class of $t$.
Then $T$ is an automorphism if $b\not=0$
and $A(M,T)$ is connected if $1+a+b\not=0$.
In this case $S(M,T)$ isomorphic to $K/(b^2+ab-a-1)$.
Thus $A(M,T)$ is simply connected if $b^2+ab-a-1\not=0$.
The entry for $\FF=\ZZ/(3)$ and $f(t)=t^2-t+1$ 
in the table on page 49 of \cite{carterjkls2} is not compatile with this,
but it is a misprint.
\end{example}

\end{document}